\title{Combinatorial Constructions of Weight Bases: The Gelfand-Tsetlin Basis}
\author{Patricia Hersh and Cristian Lenart}
\address{Department of Mathematics, Rawles Hall, 
Indiana University, Bloomington, IN 47405}
\email{phersh@indiana.edu}
\address{Department of Mathematics and Statistics, State University of New York at Albany, Albany, NY 12222}
\email{lenart@albany.edu}
\thanks{P. H.  was partially supported by National Science Foundation 
grant DMS-0500638.  C. L. was partially supported by National Science Foundation 
grants DMS-0403029 and DMS-0701044}
\numberwithin{equation}{section}
\theoremstyle{plain}
\newtheorem{theorem}{Theorem}[section]
\newtheorem{proposition}[theorem]{Proposition}
\newtheorem{lemma}[theorem]{Lemma}
\theoremstyle{definition}
\theoremstyle{remark}
\newtheorem{remark}[theorem]{Remark}
\newcommand{\ts}{\,}
\newcommand{\gl}{\mathfrak{gl}}
\def\w{{\rm wt}}
\newcommand{\stackprod}[2]{\prod_{\begin{array}{c}\vspace{-6mm}\;\\ \vspace{-1mm}\scriptstyle{#1}\\ \scriptstyle{#2}\end{array}} }
\begin{document}
\bibliographystyle{plain}

\begin{abstract} This work is part of a project on weight bases for the irreducible representations of semisimple Lie algebras with respect to which the representation matrices of the Chevalley generators are given by explicit formulas. In the case of $\mathfrak{sl}_n$, the celebrated Gelfand-Tsetlin basis is the only such basis known. Using the setup of supporting graphs developed by Donnelly, we present a simple combinatorial proof of the Gelfand-Tsetlin formulas based on a rational function identity. Some properties of the Gelfand-Tsetlin basis are derived via an algorithm for solving certain equations on the lattice of semistandard Young tableaux. 
\end{abstract}

\maketitle

\section{Introduction}

This work is related to combinatorial constructions of weight bases for the irreducible representations of semisimple Lie algebras on which the action of the Chevalley generators is made explicit. We will use the setup introduced by Donnelly \cite{donecr,donecf,donepb}. The main idea is to encode a weight basis into an edge-colored ranked poset (called a supporting graph), whose Hasse diagram has its edges labeled with two complex coefficients. This structure is known as a representation diagram, and it explicitly gives the action of the Chevalley generators of the Lie algebra on the weight basis. Verifying that an assignment of labels to an edge-colored poset is a representation diagram amounts to checking that the labels satisfy some simple relations. Thus, constructing a basis of a representation amounts to solving a system of equations associated to a poset. 

The goal in the basis construction is finding supporting graphs with a small number of edges, possibly edge-minimal ones (with respect to inclusion); this amounts to finding a basis for which the action of the Chevalley generators is expressed by a formula with a small number of terms. It is often the case that the labels of an edge-minimal supporting graph are essentially the unique solution of the corresponding system of equations. This property is known as the solitary property of the associated basis. Another interesting property of many supporting graphs constructed so far is that Kashiwara's crystal graphs of the corresponding representations \cite{kascqa,kascbq} are subgraphs. Thus, the theory of supporting graphs can be viewed as an extension of the theory of crystal graphs, which has attracted considerable interest in the combinatorics community in recent years. Finally, many supporting graphs are better behaved as posets than the corresponding crystal graphs/posets, being lattices, modular lattices, or even distributive lattices. 

In the case of irreducible representations of $\mathfrak{sl}_n$, the celebrated Gelfand-Tsetlin basis is the only known basis with respect to which the representation matrices of the Chevalley generators are given by explicit formulas. It turns out that the supporting graph of the Gelfand-Tsetlin basis is edge-minimal, solitary, and a distributive lattice \cite{donepb}; we will call it the Gelfand-Tsetlin lattice. Donnelly constructed solitary, edge-minimal, and modular lattice supporting graphs for certain special representations, most notably: the fundamental representations of $\mathfrak{sp}_{2n}$ and $\mathfrak{so}_{2n+1}$ \cite{donecr,donsad,donecf}, the ``one-rowed'' representations of  $\mathfrak{so}_{2n+1}$ \cite{dlpcro}, and the adjoint representations of all simple Lie algebras \cite{doneba}. Molev constructed bases of Gelfand-Tsetlin type (i.e., which are compatible with restriction to the Lie subalgebras of lower rank) for all irreducible representations of the symplectic and orthogonal Lie algebras \cite{molbrs,molwbg,molwbr}. The corresponding representation diagrams (i.e., the action of a system of Chevalley generators on the basis) are not explicitly given, but they can be derived from Molev's formulas for the action of certain elements spanning the Lie algebra. As posets, these supporting graphs are not lattices in general, and there are indications that they are not edge-minimal in general, either. 

Our ultimate goal is finding edge-minimal supporting graphs for symplectic and orthogonal representations, as well as studying their combinatorics. As a first step, in this paper we revisit the Gelfand-Tsetlin basis for $\mathfrak{sl}_n$, by studying it in Donnelly's combinatorial setup. This allows us to show that the construction of the Gelfand-Tsetlin basis relies on nothing more than a simple rational function identity. Moreover, the corresponding solitary and edge-minimal properties, which were derived via theoretical considerations in \cite{donepb}, are proved here in a very explicit way, by a simple algorithm for solving equations on the Gelfand-Tsetlin lattice. We envision that such algorithms and rational function identities will play a crucial role in our future work. On the other hand, let us note that the proofs of the Gelfand-Tsetlin formulas that appeared since the original paper \cite{gatfdr} by Gelfand and Tsetlin in the fifties (which contained no proof) use more sophisticated algebraic methods, based on: lowering operators \cite{hpyobi,zhecgs,zhecgr}, boson-calculus techniques \cite{babors}, polynomial expressions for Wigner coefficients \cite{gouome}, the theory of the Mickelsson algebras \cite{zheits}, and the quantum algebras called Yangians \cite{molgtb,natygt}. In turn, Molev's constructions \cite{molbrs,molwbg,molwbr} of his bases for orthogonal and symplectic representations are based on complex calculations related to Yangians. 

In terms of the combinatorial model for describing the supporting graph, we use semistandard Young tableaux rather than Gelfand-Tsetlin patterns. By analogy, we expect to use Kashiwara-Nakashima or De Concini tableaux \cite{decsst,kancgr} for the representations of the symplectic and orthogonal algebras. Note that these tableaux were already used in Donnelly's work mentioned above, whereas Molev's work is based on Gelfand-Tsetlin patterns of type $B-D$. 

\medskip

{\bf Acknowledgement.} We are grateful to Robert Donnelly for explaining to us his work on supporting graphs for representations of semisimple Lie algebras.

\section{Background}

\subsection{Supporting graphs}\label{suppgraphs}

We follow \cite{donepb,dlpsem} in describing the setup of supporting graphs/representation diagrams. We consider finite ranked posets, and we identify a poset with its Hasse diagram, thus viewing it as a directed graph with edges $s\rightarrow t$ for each covering relation 
$s\lessdot t$. These edges will be colored by a set $I$, and we write $s\stackrel{i}{\rightarrow}t$ to indicate that the corresponding edge has color $i \in I$. 
The connected components of the subgraph with edges colored $i$ are called {\em $i$-components}. Besides a given color, each edge $s\rightarrow t$ is labeled with two complex coefficients, which are not both 0, and which are denoted by $c_{t,s}$ and $d_{s,t}$. Given the poset $P$, let $V[P]$ be the complex vector space with basis $\{v_s\}_{s\in P}$. We define operators $X_i$ and $Y_i$ on $V[P]$ for $i$ in $I$, as follows:
\begin{equation}\label{updown}X_i\,v_s:=\sum_{t\::\:s\stackrel{i}{\rightarrow}t}c_{t,s}v_t\,,\;\;\;\;\;\;\;\;\;\;\;Y_i\,v_t:=\sum_{s\::\:s\stackrel{i}{\rightarrow}t}d_{s,t}v_s\,.\end{equation}
For each vertex $s$ of $P$, we also define a set of integers $\{m_i(s)\}_{i\in I}$ by $m_i(s):=2\rho_i(s)-l_i(s)$, where $l_i(s)$ is the rank of the $i$-component containing $s$, and $\rho_i(s)$ is the rank of $s$ within that component. 

Let $\mathfrak g$ be a semisimple Lie algebra with {\em Chevalley generators} $\{X_i,\,Y_i,\,H_i\}_{i\in I}$. Let $\{\omega_i\}_{i\in I}$ and $\{\alpha_i\}_{i\in I}$ denote the {\em fundamental weights} and {\em simple roots} of the corresponding root system, respectively. Consider an edge-colored and edge-labeled ranked poset $P$, as described above. Let us assign a weight to each vertex by $\w(s):=\sum_{i\in I}m_i(s)\omega_i$. We say that the edge-colored poset $P$ satisfies the {\em structure condition} for $\mathfrak g$ if $\w(s)+\alpha_i=\w(t)$ whenever $s\stackrel{i}{\rightarrow}t$. 

We now define two conditions on the pairs of edge labels $(c_{t,s},d_{s,t})$. We call $\pi_{s,t}:=c_{t,s}\,d_{s,t}$ an {\em edge product}. The edge-labeled poset $P$ satisfies the {\em crossing condition} if for any vertex $s$ and any color $i$ we have
\begin{equation}\label{crossing}\sum_{r\::\:r\stackrel{i}{\rightarrow}s}\pi_{r,s}-\sum_{t\::\:s\stackrel{i}{\rightarrow}t}\pi_{s,t}=m_i(s)\,.\end{equation}
A relation of the above form is called a {\em crossing relation}. The edge-labeled poset $P$ satisfies the {\em diamond condition} if for any pair of vertices $(s,t)$ of the same rank and any pair of colors $(i,j)$, possibly $i=j$, we have
\begin{equation}\label{diamond}\sum_{u\::\:s\stackrel{j}{\rightarrow}u\;{\rm and}\;t\stackrel{i}{\rightarrow}u}c_{u,s}\,d_{t,u}=\sum_{r\::\:r\stackrel{i}{\rightarrow}s\;{\rm and}\;r\stackrel{j}{\rightarrow}t}d_{r,s}\,c_{t,r}\,,\end{equation}
where an empty sum is zero. If for given pairs $(s,t)$ and $(i,j)$ there is a unique vertex $u$ such that $s\stackrel{j}{\rightarrow}u$ and $t\stackrel{i}{\rightarrow}u$, as well as a unique vertex $r$ such that $r\stackrel{i}{\rightarrow}s$ and $r\stackrel{j}{\rightarrow}t$, then the relation (\ref{diamond}) for these pairs and for the reverse pairs  $(t,s)$ and $(j,i)$ reduce to
\begin{equation}\label{diamondred}
c_{u,s}\,d_{t,u}=d_{r,s}\,c_{t,r}\,,\;\;\;\;\;\;\;\;\;\;\;c_{u,t}\,d_{s,u}=d_{r,t}\,c_{s,r}\,;
\end{equation}
these relations imply 
\begin{equation}\label{diam1}
\pi_{s,u}\,\pi_{t,u}=\pi_{r,s}\,\pi_{r,t}\,.
\end{equation}
A relation of the form (\ref{diamond}), (\ref{diamondred}), or (\ref{diam1}) is called a {\em diamond relation}. 

We want to define a representation of $\mathfrak g$ on $V[P]$ by letting the Chevalley generators $X_i$ and $Y_i$ act as in (\ref{updown}), and by setting 
\begin{equation}\label{acth}H_i\,v_s:=m_i(s)\,v_s\,.\end{equation}
The following proposition gives a necessary and sufficient condition on the edge labels.

\begin{proposition}\label{cond}\cite[Lemma 3.1]{dlpsem}\cite[Proposition 3.4]{donepb}
Given an edge-colored and edge-labeled ranked poset $P$, the actions {\rm (\ref{updown})} and {\rm (\ref{acth})} define a representation of $\mathfrak g$ on $V[P]$ if and only if $P$ satisfies the diamond, crossing, and structure conditions. 
\end{proposition}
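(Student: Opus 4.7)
The plan is to translate each defining relation of the Chevalley--Serre presentation of $\mathfrak{g}$ into a combinatorial condition on $P$ by expanding both sides in the basis $\{v_s\}_{s\in P}$ and matching coefficients. Because $V[P]$ is finite dimensional, the Serre relations are automatic once the remaining Chevalley relations hold (by the standard integrability argument), so the equivalence reduces to verifying
\[
[H_i,H_j]=0,\quad [H_i,X_j]=a_{ji}X_j,\quad [H_i,Y_j]=-a_{ji}Y_j,\quad [X_i,Y_j]=\delta_{ij}H_i,
\]
where $a_{ji}:=\langle\alpha_j,\alpha_i^\vee\rangle$.

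The first relation is immediate from (\ref{acth}) since the $H_i$ act simultaneously diagonally on $\{v_s\}$. For the second, a direct expansion using (\ref{updown}) and (\ref{acth}) gives
\[
\bigl([H_i,X_j]-a_{ji}X_j\bigr)v_s\;=\;\sum_{t\,:\,s\stackrel{j}{\rightarrow}t}c_{t,s}\bigl(m_i(t)-m_i(s)-a_{ji}\bigr)\,v_t,
\]
which vanishes for all $s,i,j$ iff $m_i(t)-m_i(s)=a_{ji}$ on every $j$-colored covering $s\to t$; via $\alpha_j=\sum_i a_{ji}\omega_i$ this is exactly the structure condition. The $[H_i,Y_j]$ relation is handled identically.

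For $[X_i,Y_j]v_s$, iterating (\ref{updown}) shows that only vectors $v_t$ with $t$ at the same rank as $s$ can appear. When $t=s$ and $i=j$, the coefficient equals $\sum_r\pi_{r,s}-\sum_u\pi_{s,u}$ and must match $m_i(s)$, which is exactly the crossing condition (\ref{crossing}); when $t=s$ and $i\ne j$, the coefficient vanishes trivially because no covering edge can simultaneously carry two distinct colors. In every remaining case the coefficient equals
\[
\sum_{r\,:\,r\stackrel{j}{\rightarrow}s,\,r\stackrel{i}{\rightarrow}t}d_{r,s}\,c_{t,r}\;-\;\sum_{u\,:\,s\stackrel{i}{\rightarrow}u,\,t\stackrel{j}{\rightarrow}u}c_{u,s}\,d_{t,u},
\]
and its vanishing is precisely (\ref{diamond}) applied to the pair $(s,t)$ with colors $(j,i)$. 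Reading each implication in both directions proves the proposition.

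Beyond these coefficient matchings the only nontrivial ingredient is the reduction from the full Chevalley--Serre presentation to the four relations above, which relies on the finite dimensionality of $V[P]$. The main bookkeeping obstacle is ensuring that every diamond and crossing relation arises from exactly one choice of $s,t,i,j$, with the degenerate cases described above treated consistently so that no relation is either missed or redundantly enforced.
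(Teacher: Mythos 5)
The paper does not actually prove Proposition \ref{cond}; it is imported from \cite{dlpsem} and \cite{donepb}, so there is no internal proof to compare against. Your argument reconstructs the standard proof from those sources: expand the Chevalley relations in the basis $\{v_s\}$ and match coefficients, with $[H_i,H_j]=0$ automatic, the $[H_i,X_j]$ and $[H_i,Y_j]$ relations encoding the structure condition, and $[X_i,Y_j]=\delta_{ij}H_i$ splitting into the crossing condition (coefficient of $v_s$ itself, $i=j$) and the diamond condition (coefficients of $v_t$ for $t\ne s$ of the same rank, plus the vacuous $t=s$, $i\ne j$ case). The coefficient computations are correct, including the transposition of colors $(i,j)\mapsto(j,i)$ needed to match (\ref{diamond}).

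Two points should be tightened. First, the $[H_i,X_j]$ computation forces $\w(t)-\w(s)=\alpha_j$ only on edges with $c_{t,s}\neq 0$; to obtain the structure condition on \emph{every} edge you must combine this with the $[H_i,Y_j]$ computation and invoke the standing hypothesis that $c_{t,s}$ and $d_{s,t}$ are never both zero. Second, and more substantively, the reduction to the four displayed relations is the one genuinely nontrivial step, and ``finite dimensionality of $V[P]$'' is not by itself the right hypothesis: what the standard argument uses is that each $X_i$ strictly raises and each $Y_i$ strictly lowers rank in a finite ranked poset (hence both are nilpotent) and that the $H_i$ act diagonally, so that for each $i$ the triple $(X_i,Y_i,H_i)$ makes $\mathrm{End}(V[P])$ a finite-dimensional $\mathfrak{sl}_2$-module under $\mathrm{ad}$; the operator $X_j$ is then a lowest weight vector of weight $\langle\alpha_j,\alpha_i^\vee\rangle\le 0$ (since $[Y_i,X_j]=0$ for $i\ne j$), whence $(\mathrm{ad}\,X_i)^{1-\langle\alpha_j,\alpha_i^\vee\rangle}X_j=0$, and similarly for the $Y$'s. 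As written, your appeal to ``the standard integrability argument'' is a citation rather than a proof, which is acceptable here since the proposition itself is a cited result, but the hypothesis you name should be the nilpotence/diagonalizability package rather than finite dimensionality alone.
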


If the given conditions hold, the set $\{v_s\}_{s\in P}$ is a {\em weight basis} of the given representation, the edge-colored poset $P$ is called a {\em supporting graph} of the representation, and $P$ together with its edge-labels is called a {\em representation diagram}. Some general properties of supporting graphs were derived in [Section 3]\cite{donepb}. 

Many supporting graphs constructed so far have special properties, which we mention below. A supporting graph is called {\em edge-minimal} if no proper subgraph of it is the supporting graph for a weight basis of the corresponding representation. Two weight bases related by a diagonal transition matrix are called {\em diagonally equivalent}. The supporting graph for a weight basis is called {\em solitary} if the diagonally equivalent bases are the only ones with the same supporting graph. Hence, up to diagonal equivalence, a solitary weight basis is uniquely determined by its supporting graph. Two representation diagrams are called {\em edge product similar} if there is a poset isomorphims between them which preserves the edge colors and the edge products. The following lemma highlights the importance of edge products.

\begin{lemma}\label{edgeprod}\cite[Lemma 4.2]{dlpsem} Let $L$ be a representation diagram for a weight basis $\mathcal B$ of $V$ which is connected (as a graph) and modular (as a poset). A representation diagram $K$ which is edge product similar to $L$ is the representation diagram of a diagonally equivalent basis to $\mathcal B$.  
\end{lemma}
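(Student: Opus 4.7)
The plan is to produce nonzero complex scalars $\{\lambda_s\}_{s\in L}$ such that the rescaled basis $v'_s:=\lambda_s v_s$ is precisely the weight basis of $V$ whose representation diagram is $K$. Reading off the desired actions edge-by-edge, for each cover $s\stackrel{i}{\to}t$ the identities $X_i v'_s=c^K_{t,s}v'_t+\cdots$ and $Y_i v'_t=d^K_{s,t}v'_s+\cdots$ force
\[
\frac{\lambda_t}{\lambda_s}\;=\;\frac{c^L_{t,s}}{c^K_{t,s}}\;=\;\frac{d^K_{s,t}}{d^L_{s,t}},
\]
and the equality of the two right-hand sides is precisely the edge-product similarity $c^L_{t,s}d^L_{s,t}=c^K_{t,s}d^K_{s,t}$. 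So each edge $e$ of the Hasse diagram prescribes a single well-defined ratio $\rho_e\in\mathbb{C}^*$ that the $\lambda$'s must realize; the $H_i$ relation is automatic, since $m_i(s)$ depends only on the edge-colored poset.

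Next I would fix a base vertex $s_0$, set $\lambda_{s_0}=1$, and use connectedness of $L$ to define $\lambda_s$ as the product of the $\rho_e^{\pm 1}$ along any undirected path in the Hasse diagram from $s_0$ to $s$. The core of the argument is path-independence, i.e.\ that the product of the $\rho_e^{\pm 1}$ around every closed cycle equals $1$. By a standard combinatorial property of modular lattices, any such cycle is, up to homotopy in the $2$-complex obtained by filling in all rank-$2$ intervals, a concatenation of \emph{diamond cycles} $r\to s\to u$ versus $r\to t\to u$ arising from an interval $[r,u]$ of rank $2$ with two intermediate atoms. It therefore suffices to verify the cocycle condition on one such diamond.

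For such a diamond, modularity guarantees the uniqueness of $u=s\vee t$ and $r=s\wedge t$, placing us under the hypothesis of the reduced diamond relation (\ref{diamondred}). Writing the identity $c_{u,s}d_{t,u}=d_{r,s}c_{t,r}$ for both $L$ and $K$, forming the ratio $L/K$, and substituting $d^L/d^K=\rho^{-1}$, a short rearrangement gives
\[
\rho_{r\to s}\,\rho_{s\to u}\;=\;\rho_{r\to t}\,\rho_{t\to u},
\]
which is exactly the equality of the two path-values from $r$ to $u$. Once $\lambda$ is consistently defined, $\{v'_s\}$ satisfies every equation of $K$ by construction, yielding the required diagonally equivalent basis. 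The step I expect to require the most care is the reduction of an arbitrary Hasse-diagram cycle to a product of diamond cycles: this is the combinatorial content of the modularity hypothesis, and can either be invoked as a standard fact about (semi)modular lattices or verified directly by a brief induction on the minimum rank attained along the cycle.
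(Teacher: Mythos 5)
The paper itself gives no proof of this lemma --- it is quoted directly from \cite{dlpsem}, so there is no in-paper argument to compare yours against. Your proposal reconstructs what is essentially the standard argument, and its skeleton is sound: edge-product similarity is exactly what makes the ratio $\rho_e=c^L_{t,s}/c^K_{t,s}=d^K_{s,t}/d^L_{s,t}$ well defined, the $H_i$-action is untouched by a diagonal rescaling since $m_i(s)$ depends only on the edge-colored poset, and dividing the reduced diamond relation (\ref{diamondred}) for $L$ by the same relation for $K$ does yield the cocycle identity $\rho_{r\to s}\,\rho_{s\to u}=\rho_{r\to t}\,\rho_{t\to u}$. For the path-independence step you could streamline matters by using that a finite modular lattice has a top element $\hat{1}$: define $\lambda_s$ along a maximal chain from $\hat{1}$ down to $s$, and reduce well-definedness to the fact that any two maximal chains of an interval in a (semi)modular lattice are linked by single-element exchanges, i.e.\ by diamonds (the Jordan--H\"older connectivity argument). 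This is cleaner than filling in an arbitrary Hasse-diagram cycle, although your homotopy reduction can also be made to work.

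The one genuine gap is the blanket division by edge labels. An edge carries two coefficients that are required only to be \emph{not both} zero, so exactly one of $c^L_{t,s}$, $d^L_{s,t}$ may vanish. Then only one of your two formulas for $\rho_e$ is available, and if the supports differ between $L$ and $K$ --- say $c^L_{t,s}=0\neq c^K_{t,s}$, which is compatible with the two edge products being equal (both zero) --- then no choice of nonzero scalars $\lambda_s$ can realize the rescaling, so the argument as written would fail. The same issue infects the diamond step: if an edge product on the diamond vanishes, both sides of (\ref{diamondred}) can be $0=0$ and the ratio manipulation gives nothing. Your proof therefore needs an additional input ruling out such edges --- for instance, that all edge products of a connected modular representation diagram are nonzero, or that edge-product similarity is required to match which individual labels vanish; settling this point is part of what the cited source must do. For the application in this paper the issue is harmless, since (\ref{pist}) shows that all edge products of $GT(\lambda)$ are nonzero.
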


\subsection{The Gelfand-Tsetlin basis} 

Let $E_{ij}$, $i,j=1,\dots,n$ denote the standard basis of the general
linear Lie algebra
$\gl_n$ over the field of complex numbers.

Consider a partition $\lambda$ with at most $n$ rows, that is a weakly decreasing sequence of integers $(\lambda_1\ge\lambda_2\ge\dots\ge\lambda_n\ge 0)$. Let $V(\lambda)$ be the  
finite-dimensional irreducible representation of $\gl_n$ with highest weight $\lambda$. 
A basis of $V(\lambda)$ is parametrized by {\it Gelfand--Tsetlin patterns\/} $\Lambda$ associated with
$\lambda$; these are arrays of integer row vectors
\begin{align}\label{gtpatt}
&\qquad\lambda_{n1}\qquad\lambda_{n2}
\qquad\qquad\cdots\qquad\qquad\lambda_{nn}\nonumber\\
&\qquad\qquad\lambda_{n-1,1}\qquad\ \ \cdots\ \ 
\ \ \qquad\lambda_{n-1,n-1}\nonumber\\
&\quad\qquad\qquad\cdots\qquad\cdots\qquad\cdots\\
&\quad\qquad\qquad\qquad\lambda_{21}\qquad\lambda_{22}\nonumber\\
&\quad\qquad\qquad\qquad\qquad\lambda_{11}  \nonumber
\end{align}
such that the upper row coincides with $\lambda$ and 
the following conditions hold:
\begin{equation}\label{aconl}
\lambda_{ki}\ge\lambda_{k-1,i}\,,\qquad  
\lambda_{k-1,i}\ge \lambda_{k,i+1}\,,\qquad 
i=1,\dots,k-1
\end{equation}
for each $k=2,\dots,n$. Let us set
$l_{ki}=\lambda_{ki}-i+1$.

\begin{theorem}\label{gtbasis}\cite{gatfdr} There exists a basis 
$\{\xi_{\Lambda}\}$ of $V(\lambda)$ parametrized by the corresponding 
patterns $\Lambda$ such that the action
of generators of $\gl_n$ is given by the following formulas:
\begin{align}\label{ekk}
E_{kk}\ts \xi_{\Lambda}&=\left(\sum_{i=1}^k\lambda_{ki}
-\sum_{i=1}^{k-1}\lambda_{k-1,i}\right)
\xi_{\Lambda},
\\
\label{ekk+1}
E_{k,k+1}\ts \xi_{\Lambda}
&=-\sum_{i=1}^k \frac{(l_{ki}-l_{k+1,1})\cdots (l_{ki}-l_{k+1,k+1})}
{(l_{ki}-l_{k1})\cdots \wedge\cdots(l_{ki}-l_{kk})}
\ts
\xi_{\Lambda+\delta_{ki}},\\
\label{ek+1k}
E_{k+1,k}\ts \xi_{\Lambda}
&=\sum_{i=1}^k \frac{(l_{ki}-l_{k-1,1})\cdots (l_{ki}-l_{k-1,k-1})}
{(l_{ki}-l_{k1})\cdots \wedge\cdots(l_{ki}-l_{kk})}
\ts
\xi_{\Lambda-\delta_{ki}}.
\end{align}
The arrays $\Lambda\pm\delta_{ki}$
are obtained from $\Lambda$ by replacing $\lambda_{ki}$
by $\lambda_{ki}\pm1$. It is supposed
that $\xi_{\Lambda}=0$ if the array $\Lambda$ is not a pattern;
the symbol $\wedge$ indicates that the zero factor in the denominator
is skipped.
\end{theorem}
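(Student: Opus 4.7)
The plan is to apply Proposition \ref{cond}: take $P=L(\lambda)$ to be the set of Gelfand--Tsetlin patterns with top row $\lambda$, ordered entrywise, with covering relations $\Lambda \lessdot \Lambda+\delta_{ki}$ colored by $k\in\{1,\dots,n-1\}$. Assign the edge labels $c_{\Lambda+\delta_{ki},\Lambda}$ and $d_{\Lambda,\Lambda+\delta_{ki}}$ to be the scalar coefficients read directly off (\ref{ekk+1}) and (\ref{ek+1k}). If this assignment satisfies the structure, crossing, and diamond conditions, Proposition \ref{cond} will produce an $\mathfrak{sl}_n$-representation on $V[P]$; extending by the diagonal action (\ref{ekk}) yields the full $\gl_n$-action, and matching highest weights will identify the resulting module with $V(\lambda)$.

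The structure condition is a short calculation from (\ref{ekk}): one finds $m_k(\Lambda)=2\sum_i\lambda_{ki}-\sum_i\lambda_{k-1,i}-\sum_i\lambda_{k+1,i}$, and incrementing any $\lambda_{k',j}$ by $1$ shifts $m_k$ by the $\omega_k$-component of the simple root $\alpha_{k'}$, as required.

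For the crossing condition at $\Lambda$ and color $k$, I would extract it from a single partial-fraction identity. Consider
\[
R(x)=\frac{\prod_{j=1}^{k-1}(x-l_{k-1,j})\,\prod_{j=1}^{k+1}(x-1-l_{k+1,j})}{\prod_{j=1}^{k}(x-l_{kj})\,\prod_{j=1}^{k}(x-1-l_{kj})}\,.
\]
Numerator and denominator are monic of degree $2k$, so $R(x)-1$ expands at infinity as $c_1/x+O(1/x^2)$, with $c_1=2\sum_j l_{kj}-\sum_j l_{k-1,j}-\sum_j l_{k+1,j}-1$, and a direct check identifies $c_1$ with $m_k(\Lambda)$. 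At the simple poles, routine residue computations give $\mathrm{Res}_{x=l_{ki}}R=\pi_{\Lambda-\delta_{ki},\Lambda}$ and $\mathrm{Res}_{x=l_{ki}+1}R=-\pi_{\Lambda,\Lambda+\delta_{ki}}$. Equating the sum of residues with $c_1$ then yields precisely (\ref{crossing}).

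The diamond condition I would verify case-by-case according to the colors $(i,j)$ of the diamond. If $|i-j|\ge 2$, the edge products on either pair of parallel sides depend on disjoint groups of rows of the two modifications, so (\ref{diam1}) is immediate. If $i=j$, the diamond increments two distinct entries in row $k$, and direct cancellation of the factored formulas verifies (\ref{diam1}) (the extra factors reduce to the symmetric product $(l_{i_1}-l_{i_2})(l_{i_2}-l_{i_1})(l_{i_1}-l_{i_2}-1)(l_{i_2}-l_{i_1}-1)$ on both sides). If $|i-j|=1$, exactly one linear factor on each side of (\ref{diam1}) is altered, and a short one-variable cancellation closes the identity. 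I expect the main difficulty will be identifying the right rational function $R(x)$ for the crossing condition; once it is in hand, all the remaining verifications are routine algebra.
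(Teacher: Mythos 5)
Your overall strategy coincides with the paper's: both apply Proposition \ref{cond} to the Gelfand--Tsetlin lattice with the labels read off \eqref{ekk+1}--\eqref{ek+1k} (the paper works with SSYT and labels \eqref{cst}--\eqref{dst} that are only edge product similar to these, but that is cosmetic). For the crossing condition your route is genuinely different and perfectly viable: you sum the residues of a rational function $R(x)$, whereas the paper reduces to the identity \eqref{ratfn} and kills the coefficient of each $y_{i_1}\cdots y_{i_k}$ by an antisymmetry-versus-degree argument against the Vandermonde denominator; the two are really the same identity in different clothing, and your version is arguably the more classical one. One point you should not wave away as ``routine'': the poles of $R$ need not all be simple, and not every pole corresponds to an edge of the poset. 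When $\lambda_{ki}=\lambda_{k,i+1}$ the denominator has a double root at $x=l_{ki}$, and you must use the interlacing inequalities \eqref{aconl} to check that the numerator also vanishes to order two there (via $\lambda_{k-1,i}=\lambda_{ki}$ and $\lambda_{k+1,i+1}=\lambda_{ki}$), so the pole is removable; similarly, the residues at points corresponding to illegal moves $\Lambda\pm\delta_{ki}$ must be shown to vanish. This all works out, but it is exactly where the pattern conditions enter and it deserves an explicit check.

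The genuine gap is in your treatment of the diamond condition. Proposition \ref{cond} requires \eqref{diamond}, which on a distributive lattice reduces to the pair of relations \eqref{diamondred} involving the \emph{mixed} products $c_{u,s}\,d_{t,u}$ and $d_{r,s}\,c_{t,r}$ of individual labels. The relation \eqref{diam1} on edge products is only the \emph{product} of the two relations in \eqref{diamondred} and is strictly weaker; verifying \eqref{diam1} alone, as you propose for the cases $i=j$ and $|i-j|=1$, does not establish the hypothesis of Proposition \ref{cond}. (In the case $|i-j|\ge 2$ you are fine, since your disjointness argument actually gives $c_{u,s}=c_{t,r}$ and $d_{t,u}=d_{r,s}$ outright.) The fix is what the paper does: in the $i=j=k$ case one computes the ratios $d_{S,U}/d_{R,T}$ and $c_{S,R}/c_{U,T}$ separately and checks they are equal (both equal $1-1/(a_j'-a_i)$ in the paper's notation), and similarly for adjacent colors; alternatively, since all labels on genuine edges are nonzero, establishing \emph{one} of the two relations in \eqref{diamondred} together with \eqref{diam1} would suffice --- but as written you establish neither.
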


\section{The representation diagram of the Gelfand-Tsetlin basis} \label{repgt}

We restrict ourselves to $\mathfrak{sl}_n$, for which we have the standard choice of Chevalley generators $H_k:=E_{k,k}-E_{k+1,k+1}$, $X_k:=E_{k,k+1}$, and $Y_k:=E_{k+1,k}$, for $k$ in $I:=[n-1]=\{1,\ldots,n-1\}$.

We identify partitions with Young diagrams, so we refer to the cells $(i,j)$ of a partition $\lambda$. 
There is a natural
bijection between the Gelfand-Tsetlin patterns associated with $\lambda$ and
{\em semistandard Young tableaux} (SSYT) of shape $\lambda$ with entries in $[n]$, see e.g. \cite{macsfh}.
A pattern $\Lambda$ can be viewed as a sequence of partitions
\begin{equation}\label{aseq}
\lambda^{(1)}\subseteq \lambda^{(2)}\subseteq\cdots 
\subseteq \lambda^{(n)}=\lambda, \nonumber
\end{equation}
with 
$\lambda^{(k)}=(\lambda_{k1},\dots,\lambda_{kk})$. We let $\lambda^{(0)}$ be the empty partition. 
Conditions \eqref{aconl} mean that the skew diagram
$\lambda^{(k)}/\lambda^{(k-1)}$ is a horizontal strip. The SSYT $T$ associated with $\Lambda$ is then obtained by filling the cells in $\lambda^{(k)}/\lambda^{(k-1)}$ with the entry $k$, for each $k=1,\ldots,n$. 

We now define a representation diagram for $\mathfrak{sl}_n$ with edge colors $I$ on the SSYT of shape $\lambda$ with entries in $[n]$. We have an edge $S\stackrel{k}{\rightarrow}T$ whenever the tableau $T$ is obtained from $S$ by changing a single entry $k+1$ into $k$; necessarily, this is the leftmost entry $k+1$ in a row. The corresponding poset, which is known to be a distributive lattice, will be called the {\em Gelfand-Tsetlin lattice}, and will be denoted by $GT(\lambda)$. 

 To define the edge labels on $GT(\lambda)$, fix a SSYT $T$ in this lattice, and let the corresponding Gelfand-Tsetlin pattern $\Lambda$ be denoted as in (\ref{gtpatt}). The labels on the incoming/outgoing edges to/from $T$ which are colored $k$ will only depend on the corresponding partitions $\lambda^{(k-1)}$, $\lambda^{(k)}$, and $\lambda^{(k+1)}$. The {\em outer rim} $R$ of $\lambda^{(k)}$ consists of all cells $(i,j)$ not in $\lambda^{(k)}$ such that at least one of the cells $(i,j-1),\,(i-1,j),\,(i-1,j-1)$ belongs to $\lambda^{(k)}$. A cell $(i,j)$ of $R$ is called an {\em outer corner} if $(i,j-1)$ and $(i-1,j)$ belong to $R$, and an {\em inner corner} if neither $(i,j-1)$ nor $(i-1,j)$ belongs to $R$. The inner and outer corners are interleaved, and the number of the former exceeds by 1 the number of the latter.  Number the cells of $R$ from northeast to southwest starting from 1. Let $a_1<\ldots<a_{p}$ be the numbers attached to the inner corners, and $a_1'<\ldots<a_{p-1}'$ the numbers attached to the outer corners. Furthermore, if $r_1=1<r_2<\ldots<r_{p}$ are the rows of the inner corners, we denote by $b_i$ the length (which might be 0) of the component of $\lambda^{(k+1)}/\lambda^{(k)}$ in row $r_i$, for $i=1,\ldots,p$; similarly, we denote by $b_{i}'$ the length of the component of $\lambda^{(k)}/\lambda^{(k-1)}$ in row $r_{i+1}-1$, for $i=1,\ldots,p-1$. Note that  row $r_i$ of $T$ contains both $k$ and $k+1$ precisely when $r_{i+1}-1=r_i$, $b_i>0$, and $b_i'>0$. The notation introduced above is illustrated in the figure below; the Young diagram with a bold boundary is that of $\lambda^{(k)}$, while the indicated cells are those in $\lambda^{(k+1)}/\lambda^{(k)}$ and $\lambda^{(k)}/\lambda^{(k-1)}$.

\vspace{6mm}


\begin{figure}[ht]
\mbox{\epsfig{file=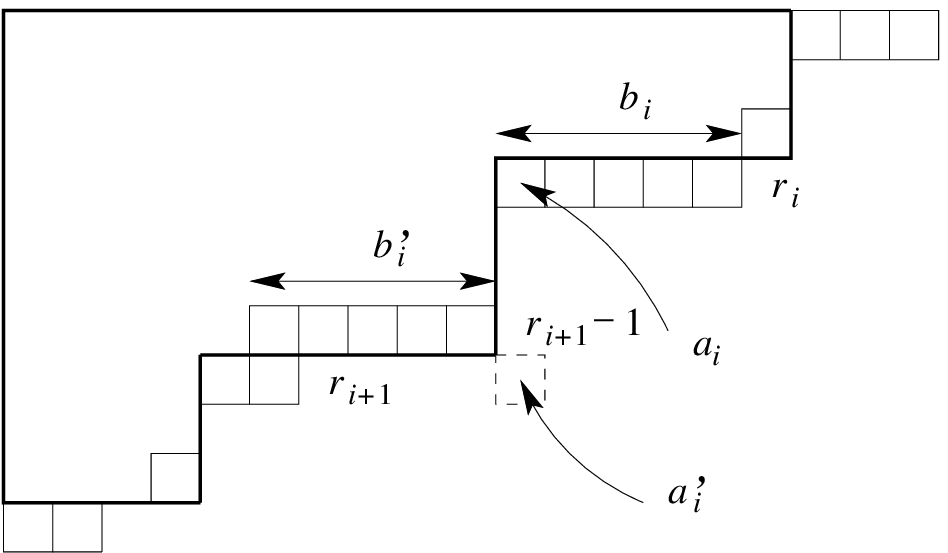}}
\label{fig}
\end{figure}
 
\vspace{6mm}

Assume that we have $T\stackrel{k}{\rightarrow}U$, and that the entry $k+1$ in $T$ changed into $k$ is in some row $r_i$ for $1\le i\le p$ (this is always the case). Thus, the Gelfand-Tsetlin pattern corresponding to $U$ is $\Lambda+\delta_{kr_i}$. Then let
\begin{equation}\label{cst}
c_{U,T}:=b_i \prod_{j=1}^{i-1}\left(1+\frac{b_j}{a_i-a_j}\right)\prod_{j=i+1}^{p}\left(1-\frac{b_j}{a_j-a_i}\right)\,.
\end{equation}
Similarly, assume that we have $S\stackrel{k}{\rightarrow}T$, and that the entry $k$ in $T$ changed into $k+1$ is in some row $r_{i+1}-1$ for $1\le i\le p-1$ (this is always the case). Thus, the Gelfand-Tsetlin pattern corresponding to $S$ is $\Lambda-\delta_{k,r_{i+1}-1}$. Then let
\begin{equation}\label{dst}
d_{S,T}:=b_i' \prod_{j=1}^{i-1}\left(1-\frac{b_j'}{a_i'-a_j'}\right)\prod_{j=i+1}^{p-1}\left(1+\frac{b_j'}{a_j'-a_i'}\right)\,.
\end{equation}
The horizontal strip conditions on $\lambda^{(k+1)}/\lambda^{(k)}$ and $\lambda^{(k)}/\lambda^{(k-1)}$ guarantee that $c_{U,T}$ and $d_{S,T}$ are nonnegative rational numbers which only become $0$ when $b_i=0$, respectively $b_i'=0$.

It is not hard to calculate the edge products using (\ref{cst}) and (\ref{dst}). For instance, we have
\begin{equation}\label{pist}
\pi_{T,U}=b_i \prod_{j=1}^{i-1}\left(1-\frac{b_j}{a_j-a_i}\right)\prod_{j=i+1}^{p}\left(1-\frac{b_j}{a_j-a_i}\right)\prod_{j=1}^{i-1}\left(1+\frac{b_j'}{a_j'-a_i}\right)\prod_{j=i}^{p-1}\left(1+\frac{b_j'}{a_j'-a_i}\right)\,,
\end{equation}
and a similar formula for $\pi_{S,T}$.

\begin{proposition}\label{eps}
The edge-colored and edge-labeled poset defined above is edge product similar, as a representation diagram, to that of the Gelfand-Tsetlin basis. 
\end{proposition}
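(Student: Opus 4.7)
The plan is to reduce Proposition~\ref{eps} to two verifications: that the underlying edge-colored posets agree, and that the edge products agree under the identification. The first is the classical bijection between semistandard Young tableaux of shape $\lambda$ with entries in $[n]$ and Gelfand-Tsetlin patterns with top row $\lambda$; under this bijection the cover $T\stackrel{k}{\rightarrow}U$ (changing the leftmost $k+1$ in some row $q$ of $T$ to $k$) corresponds to the cover $\Lambda\lessdot\Lambda+\delta_{k,q}$ in the GT poset, with edge colors preserved by construction. Everything then reduces to the numerical identity $\pi_{T,U}=\pi^{GT}_{T,U}$.

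I would first compute the GT edge product from Theorem~\ref{gtbasis}, multiplying the coefficients from (\ref{ekk+1}) and (\ref{ek+1k}) (the latter evaluated at $\Lambda+\delta_{k,q}$) to obtain, with $x:=l_{k,q}$,
\[ \pi^{GT}_{T,U}=-\frac{\prod_{j=1}^{k+1}(x-l_{k+1,j})\,\prod_{j=1}^{k-1}(x+1-l_{k-1,j})}{\prod_{j\neq q}(x-l_{k,j})\,\prod_{j\neq q}(x+1-l_{k,j})}. \]
In parallel, I would translate the rim data of (\ref{pist}) into $l$-values. Since the content of a rim cell decreases by $1$ at each step from NE to SW and the $i$-th inner corner sits at $(r_i,\lambda_{k,r_i}+1)$ with content $l_{k,r_i}$, one has $a_j-a_i=l_{k,r_i}-l_{k,r_j}$. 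The $i$-th outer corner is associated with the removable corner $(r_{i+1}-1,\lambda_{k,r_{i+1}-1})$ of $\lambda^{(k)}$ and shares its content $l_{k,r_{i+1}-1}-1$, so $a_j'-a_i=l_{k,r_i}-l_{k,r_{j+1}-1}+1$; the strip lengths become $b_i=l_{k+1,r_i}-l_{k,r_i}$ and $b_i'=l_{k,r_{i+1}-1}-l_{k-1,r_{i+1}-1}$. Substituting rewrites the target (\ref{pist}) as
\[ \pi_{T,U}=b_i\prod_{j\neq i}\frac{x-l_{k+1,r_j}}{x-l_{k,r_j}}\,\prod_{j=1}^{p-1}\frac{(x+1)-l_{k-1,r_{j+1}-1}}{(x+1)-l_{k,r_{j+1}-1}}. \]

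The heart of the argument is then the rational function identity equating these two displayed expressions. I would establish it through two interlacing-driven cancellations in $\pi^{GT}_{T,U}$. For any $j$ that is not an inner-corner row of $\lambda^{(k)}$, meaning $\lambda_{k,j-1}=\lambda_{k,j}$, the double inequality $\lambda_{k,j-1}\geq\lambda_{k+1,j}\geq\lambda_{k,j}$ forces $\lambda_{k+1,j}=\lambda_{k,j}$, hence $l_{k+1,j}=l_{k,j}$, so the factor $(x-l_{k+1,j})$ in the numerator cancels the matching factor of the denominator. Symmetrically, for any $j$ that is not a removable-corner row ($\lambda_{k,j}=\lambda_{k,j+1}$), the interlacing $\lambda_{k,j}\geq\lambda_{k-1,j}\geq\lambda_{k,j+1}$ pinches $\lambda_{k-1,j}=\lambda_{k,j+1}$, and both $(x+1-l_{k-1,j})$ and $(x+1-l_{k,j})$ collapse to the common value $x-l_{k,j+1}$ and cancel in the second fraction. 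After these cancellations only inner- and outer-corner rows survive, and extracting the $j=i$ factor from the residual $\prod_{j=1}^{p}(x-l_{k+1,r_j})$ yields $-(x-l_{k+1,r_i})=b_i$, which both absorbs the minus sign of (\ref{ekk+1}) and supplies the leading $b_i$ of the target formula.

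The main obstacle will be the careful bookkeeping of the boundary factors. The index $j=k+1$ appears in the numerator of $\pi^{GT}_{T,U}$ but not in the denominator, and the index $j=k$ appears in the denominator $\prod_{j\neq q}(x+1-l_{k,j})$ but not in the corresponding numerator. Using the standard conventions $\lambda_{k,k+1}=\lambda_{k-1,k}=0$, a case split on whether $\lambda_{k,k}>0$ (equivalently, whether row $k+1$ is an inner-corner row) shows that the excess factors produced on the two sides cancel in pairs across the two fractions in $\pi^{GT}_{T,U}$, completing the rational function identity and hence the required edge product similarity.
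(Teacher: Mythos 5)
Your proposal is correct and follows essentially the same route as the paper's proof: both translate the rim data $a_i,a_i',b_i,b_i'$ into the quantities $l_{k,j}$ (the paper phrases this via hook lengths rather than contents), cancel the factors for non-corner rows using the interlacing conditions, and handle the boundary discrepancy at row $k+1$ (the paper's exceptional case $r_p=k+1$). The only organizational difference is that you match the edge products directly, whereas the paper matches the coefficients $c_{U,T}$ and $d_{S,T}$ individually against \eqref{ekk+1} and \eqref{ek+1k} and then observes that the two exceptional factors cancel in the product.
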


\begin{proof}
Observe first that, if $j<i$ and $\lambda_j^{(p)}>\lambda_i^{(m)}$, then the difference $l_{pj}-l_{mi}$ is the length of the hook  with rightmost cell $(j,\lambda_j^{(p)})$ and bottom cell $(i,\lambda_i^{(m)}+1)$. Still assuming $i>j$, this means that  $l_{kr_j}-l_{kr_i}=a_i-a_j$ and $l_{k+1,r_j}-l_{kr_i}=a_i-a_j+b_j$; thus, by pairing these two factors, we obtain
\[\frac{l_{kr_i}-l_{k+1,r_j}}{l_{kr_i}-l_{kr_j}}=1+\frac{b_j}{a_i-a_j}\,.\]
The other three types of brackets in (\ref{cst}) and (\ref{dst}) are obtained in a similar way from the Gelfand-Tsetlin formulas (\ref{ekk+1}) and (\ref{ek+1k}). However, there are exceptions when $r_p=k+1$ (see below), namely the bracket corresponding to $j=p$ in (\ref{cst}), and the bracket corresponding to $j=p-1$ in (\ref{dst}). Finally, note that the quotient in (\ref{ekk+1}) corresponding to the rows different from $r_j$ is 1; a similar statement holds for (\ref{ek+1k}). 

Let us now discuss the exception mentioned above. Assume that $r_p=k+1$.  The coefficient $c_{U,T}$ differs from the corresponding coefficient in (\ref{ekk+1}) by a factor $1/(a_p-a_i)$. Similarly, the coefficient $d_{S,T}$ differs from the corresponding coefficient in (\ref{ek+1k}) by a factor $a_{p-1}'-a_i'+b_{p-1}'=a_p-a_i'$. The last equality holds because the first $\lambda_{k}^{(k)}$ entries in row $k$ of $T$ are equal to $k$.

It is clear from (\ref{pist}) and the discussion related to the slight discrepancy between (\ref{ekk+1})-(\ref{ek+1k}) and (\ref{cst})-(\ref{dst}) that the representation diagram of the Gelfand-Tsetlin basis is edge product similar to the one described in this section. 
\end{proof}

\section{A proof of the Gelfand-Tsetlin formulas}

Based on Proposition \ref{cond}, the proof of the Gelfand-Tsetlin formulas amounts to the first statement in the theorem below.

\begin{theorem}\label{relations} The edge-colored and edge-labeled poset defined in Section {\rm \ref{repgt}} satisfies the diamond, crossing, and structure conditions. Hence, it is the representation diagram of the irreducible representation of $\mathfrak{sl}_n$ with highest weight $\lambda$. 
\end{theorem}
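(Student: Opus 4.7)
The plan is to verify all three conditions of Proposition~\ref{cond} directly on the poset of Section~\ref{repgt} with its labels (\ref{cst})--(\ref{dst}). Once that is done, Proposition~\ref{cond} yields a representation of $\mathfrak{sl}_n$ on $V[GT(\lambda)]$, and Proposition~\ref{eps} combined with Lemma~\ref{edgeprod} (applicable because the distributive lattice $GT(\lambda)$ is connected and modular) identifies this representation with $V(\lambda)$.

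The structure condition is immediate. If $S\stackrel{k}{\rightarrow}T$, a single entry changes from $k+1$ to $k$, so $\lambda^{(k)}$ gains one cell while all other $\lambda^{(j)}$ are unchanged; using $m_j(\Lambda)=|\lambda^{(j)}/\lambda^{(j-1)}|-|\lambda^{(j+1)}/\lambda^{(j)}|$ (read off from (\ref{ekk})), one obtains $\w(T)-\w(S)=2\omega_k-\omega_{k-1}-\omega_{k+1}=\alpha_k$.

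The crossing condition is the key step, and the rational function identity promised in the introduction. Fix $T$ (with pattern $\Lambda$) and color $k$, and keep the notation $a_\bullet, a_\bullet', b_\bullet, b_\bullet'$ from Section~\ref{repgt}. Form the rational function
\begin{equation*}
F(x)\;:=\;\frac{\prod_{j=1}^{p}(x-a_j+b_j)\;\prod_{j=1}^{p-1}(x-a_j'-b_j')}{\prod_{j=1}^{p}(x-a_j)\;\prod_{j=1}^{p-1}(x-a_j')}\,.
\end{equation*}
Numerator and denominator have common degree $2p-1$, so $F(x)\to 1$ at infinity and the Laurent coefficient of $x^{-1}$ there equals $\sum_j b_j-\sum_j b_j'$. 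A short manipulation using $1-b/(a-c)=(c-a+b)/(c-a)$ identifies $\mathrm{Res}_{x=a_i}F$ with $\pi_{T,U_i}$ (formula (\ref{pist})) and $-\mathrm{Res}_{x=a_i'}F$ with the analogous closed form for $\pi_{S_i,T}$. Summing residues and noting via the horizontal-strip conditions that $\sum_j b_j=|\lambda^{(k+1)}/\lambda^{(k)}|$ and $\sum_j b_j'=|\lambda^{(k)}/\lambda^{(k-1)}|$, one concludes
\begin{equation*}
\sum_i\pi_{T,U_i}-\sum_i\pi_{S_i,T}\;=\;|\lambda^{(k+1)}/\lambda^{(k)}|-|\lambda^{(k)}/\lambda^{(k-1)}|\;=\;-m_k(T)\,,
\end{equation*}
which is exactly the crossing relation (\ref{crossing}).

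For the diamond condition I split on the color distance $|i-j|$. When $|i-j|\ge 2$, a color-$i$ move changes $\lambda^{(i)}$, which does not appear among $\lambda^{(j-1)},\lambda^{(j)},\lambda^{(j+1)}$, and symmetrically; hence the labels on both pairs of parallel edges coincide and the reduced relations (\ref{diamondred}) hold trivially. When $i=j$ or $|i-j|=1$ the two moves interact through a shared partition layer, and (\ref{diamondred}) has to be verified by direct substitution into (\ref{cst}) and (\ref{dst}) after tracking how the addition of a single cell modifies the local data $(a_\bullet,a_\bullet',b_\bullet,b_\bullet')$ governing the other color's labels; the degenerate cases $r_p=k+1$ discussed after Proposition~\ref{eps} are handled separately but reduce to the same kind of local algebra. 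The main obstacle I anticipate is the bookkeeping in these interacting cases: although each subcase is a finite, explicit identity, one has to enumerate them according to the relative positions of the two moved rows and whether the second move creates, destroys, or shifts an inner corner. By contrast, the structure condition is a one-liner and the crossing condition is clean once $F(x)$ is written down.
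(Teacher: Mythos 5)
Your structure-condition and crossing-condition arguments are sound, and the crossing argument takes a genuinely different route from the paper's: the paper reduces the crossing relation to the identity $\sum_{i} y_i\prod_{j\ne i}\bigl(1+\frac{y_j}{x_j-x_i}\bigr)=\sum_i y_i$ and proves it by a symmetry-plus-degree-count argument (the coefficient of $y_1\cdots y_k$ is an antisymmetric polynomial over the Vandermonde of too small a degree, hence zero), whereas you obtain the same conclusion by summing the residues of $F(x)=\prod_j\bigl(1+\frac{b_j}{x-a_j}\bigr)\prod_j\bigl(1-\frac{b_j'}{x-a_j'}\bigr)$ and comparing with its expansion at infinity. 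Both are valid; the residue computation is arguably more direct, while the paper's identity is stated for independent indeterminates. One detail you must add: the residues at points $a_i$ (resp.\ $a_i'$) where the corresponding decrement (resp.\ increment) is illegal, so that no edge of $GT(\lambda)$ exists, have to be shown to vanish in order for the residue sum to match the crossing relation; the paper checks this explicitly (an illegal decrement in row $r_i$ forces $a_i-a_{i-1}'=b_{i-1}'$, which kills the product in (\ref{pist})).

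The genuine gap is the diamond condition. Your reduction to the trivial case for $|i-j|\ge 2$ is fine, but for $i=j$ and $|i-j|=1$ you only assert that the relation ``has to be verified by direct substitution'' and flag the bookkeeping as an anticipated obstacle --- that verification is precisely the nontrivial content of this part of the theorem and cannot be deferred. The paper carries it out in a few lines: for two edges of color $k$ it shows $d_{S,U}/d_{R,T}=1-\frac{1}{a_j'-a_i}=c_{S,R}/c_{U,T}$, and for colors $k$ and $k+1$ it exhibits the two quotients $c_{U,T}/c_{S,R}$ and $d_{R,T}/d_{S,U}$ and checks they agree; the feared case explosion does not occur because only one or two bracket factors in (\ref{cst}) and (\ref{dst}) change. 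Separately, your final identification of the representation with $V(\lambda)$ via Proposition~\ref{eps} and Lemma~\ref{edgeprod} is circular in the context of this paper: it presupposes Theorem~\ref{gtbasis}, which Section~4 is meant to reprove. The paper instead observes that the basis vector indexed by the maximum of $GT(\lambda)$ is a highest weight vector of weight $\lambda$ and concludes from (\ref{acth}) that the representation is the irreducible one with that highest weight.
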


\begin{proof}
We use the notation in Section \ref{repgt} related to the SSYT $T$, as well as the notation in Section \ref{suppgraphs}. Let $\mu_k:=b_1'+\ldots+b_{p-1}'$ be the number of entries $k$ in $T$. We have $m_k(T)=\mu_k-\mu_{k+1}$ by definition. Furthermore, $\w(T)=\mu_1\varepsilon_1+\ldots+\mu_n\varepsilon_n$ since, by definition, $\langle\w(T),\alpha_k\rangle=m_k(T)$ (recall that $\alpha_k=\varepsilon_k-\varepsilon_{k+1}$ is the dual basis element to $\omega_k$ under the scalar product $\langle\varepsilon_i,\varepsilon_j\rangle=\delta_{ij}$). Hence, given $T\stackrel{k}{\rightarrow}U$, the structure condition $\w(T)+\alpha_k=\w(U)$ is verified. 

We now address the crossing condition. We have $m_k(T)=b_1'+\ldots+b_{p-1}'-b_1-\ldots-b_p$. In order to simplify formulas, we make the substitution $x_{2i-1}:=a_i$, $y_{2i-1}=-b_i$ for $i=1,\ldots,p$, and $x_{2i}:=a_i'$, $y_{2i}:=b_i'$ for $i=1,\ldots,p-1$. Based on (\ref{pist}), the crossing relation (\ref{crossing}) for vertex $T$ and color $k$ can be written as follows:
\begin{equation}\label{ratfn}
\sum_{i=1}^N y_i\stackprod{1\le j\le N}{j\ne i}\left(1+\frac{y_j}{x_j-x_i}\right)=\sum_{i=1}^N y_i\,,
\end{equation}
where $N:=2p-1$. Note that we can take the above sum over all $i$ from 1 to $2p-1$ because when we attempt an illegal change of a $k+1$ into $k$ or viceversa, the corresponding term is 0. Indeed, assume for instance that we intend to change a $k+1$ in row $r_i$, but this is illegal because we have a $k$ immediately above it. This means that $a_i-a_{i-1}'=b_{i-1}'$, and therefore the right-hand side of (\ref{pist}) cancels. 

Now let us prove the rational function identity (\ref{ratfn}). It is not hard to see that its left-hand side is invariant under the diagonal action of the symmetric group $S_N$ on the variables $x_1,\ldots,x_N$ and $y_1,\ldots,y_N$. Let us expand and extract the coefficient of $y_1y_2\ldots y_k$. For $k=1$ it is clearly 1. For $2\le k\le N$, it is the following symmetric rational function in $x_1,\ldots,x_k$:
\[\sum_{i=1}^k\stackprod{1\le j\le k}{j\ne i}\frac{1}{x_j-x_i}\,.\]
The common denominator is the Vandermonde determinant in $x_1,\ldots,x_k$. Thus the numerator is an antisymmetric polynomial in the same variables, but its degree is $\binom{k}{2}-(k-1)=\binom{k-1}{2}$, so it has to be 0. This concludes the proof of (\ref{ratfn}) by the symmetry of its left-hand side under the diagonal action of $S_N$. 

Since the Gelfand-Tsetlin lattice $GT(\lambda)$ is a distributive lattice, the diamond relations take the simpler form (\ref{diamondred}). Assume that we have $T\stackrel{k}{\rightarrow}U$, $S\stackrel{l}{\rightarrow}U$, $R\stackrel{l}{\rightarrow}T$, and $R\stackrel{k}{\rightarrow}S$. We use the same parameters $a_i,b_i,a_i',r_i$ for the SSYT $T$ as in Section \ref{repgt}. We need to show that $c_{U,T}\,d_{S,U}=d_{R,T}\,c_{S,R}$. This is trivial except for $l=k$ and $l=k+1$, because then $c_{U,T}=c_{S,R}$ and $d_{S,U}=d_{R,T}$. 

Now consider the case $l=k$. Let $r_i$ be the row containing the changed $k+1$ in $T$ and $R$ (for obtaining $S$), and let $r_{j+1}-1$ be the row containing the changed $l=k$ in $U$ (for obtaining $S$) and $T$. Assume that $j\ge i$, the other case being completely similar. Then, by (\ref{cst}) and (\ref{dst}), we have
\[\frac{d_{S,U}}{d_{R,T}}=1-\frac{1}{a_j'-a_i}=\frac{c_{S,R}}{c_{U,T}}\,.\]

Finally, consider the case $l=k+1$. Let $r_i$ be the row containing the changed $k+1$ in $T$ (for obtaining $U$) and $R$, and let $r_{j}$ be the row containing the changed $l=k+1$ in $U$ and $T$ (for obtaining $R$). Assume that $j>i$, the other case being completely similar. Then, by (\ref{cst}) and (\ref{dst}), we have
\begin{align*}&\frac{c_{U,T}}{c_{S,R}}=\left(1-\frac{b_j}{a_j-a_i}\right)\left(1-\frac{b_j-1}{a_j-a_i}\right)^{-1}\,,\\
& \frac{d_{R,T}}{d_{S,U}}=\left(1-\frac{b_i}{a_j-b_j-a_i+b_i}\right)\left(1-\frac{b_i-1}{a_j-b_j-a_i+b_i}\right)^{-1}\,.\end{align*}
A simple calculation shows that the two quotients are equal. 

By Proposition \ref{cond}, the edge-colored and edge-labeled poset defined in Section {\rm \ref{repgt}} encodes a representation of $\mathfrak{sl}_n$. Consider the basis vector corresponding to the maximum of the poset $GT(\lambda)$, namely to the SSYT of shape $\lambda$ with all entries in row $i$ equal to $i$. This is clearly a highest weight vector, and relation (\ref{acth}) shows that the given representation is the irreducible one with highest weight $\lambda$. 
\end{proof}

\section{Edge-minimality and the solitary property}

In this section, we give an algorithm for determining the edge products in a Gelfand-Tsetlin 
lattice from the vertex weights $\{ m_i(s) \} $.  
The idea is to show first how the edge products of color 1 are forced
by the vertex weights; then as an inductive step, 
we show that after edge products have been determined for 
all edges  colored $1,2,\dots ,k-1$, then there
is an algorithm forcing the edge products for all of the edges colored $k$.  

\begin{remark}\label{diamond-product}
If the edge products for three of the four covering relations comprising a diamond are known, then
the diamond relation (\ref{diam1}) will determine the fourth.
\end{remark}

We begin by describing the  algorithm for the edges
colored 1. 
Notice that the $1$-components are chains, or in  other words for each
poset element $T$ there is at most one $U$ covering $T$ such that the covering relation $T\prec U$
is colored
1 and likewise 
there is also at most one $R$ covered by $T$  such that the
covering relation $R\prec T$  is colored 1.
This is immediate from the fact that the value 1 may only appear in the first row of 
a semistandard Young tableau, so that any semistandard Young tableau has at most one copy of 
the value 1 which may be incremented to a 2 still yielding a semistandard Young tableau, since the
only candidate is the rightmost 1 in the first row.

For any minimal or maximal element $T$ in a $1$-component, the crossing condition
forces the edge product for the unique covering involving $T$.  We may keep 
repeating this idea, i.e. at each step using the crossing condition to force one additional
edge product, namely one  at a poset
element $S$ such that all other edge products for covering relations
involving $S$ that are colored 1 have already had
their values forced.  In this manner, we determine all edge products for covering relations colored 1.

Before turning to the inductive step, we make a few 
observations about semistandard Young tableaux that will be used in our upcoming algorithm
for determining edge products of color $k$ once the edge products 
are known for colors $1,2,\dots ,k-1$.  
We now think of an element $T$ of the Gelfand-Tsetlin lattice as a
semistandard Young tableau, using the earlier description of covering relations $T\prec U$ as
pairs of SSYT in which $U$ is obtained from $T$ by incrementing an entry from $i$ to $i+1$.
Recall that this covering relation $T\prec U$  is then said to have color $i$.  

\begin{remark}
The value $k$ may only appear within the first $k$ rows of a SSYT, since the column 
immediately above any occurance of the value $k$ must consist of a strictly increasing 
sequence of positive integers.  
Consequently, 
there are at most $k$ upward edges and at most $k$ 
downward edges colored $k$ originating at any specified element $T$ in the 
Gelfand-Tsetlin lattice, since only the rightmost $k$ in a row may yield an upward covering
relation colored $k$ and only the leftmost $k$ may yield a downward one.
\end{remark}

Let us call a value $k+1$ in a SSYT {\it decrementable} if it may be replaced by $k$ with the 
result still being a SSYT.  Similarly, call  a value 
$k$ in a SSYT {\it incrementable} if it may be replaced by the value $k+1$ to yield a SSYT.

\begin{remark}
Let $d(k)$ be the number of rows in a SSYT which contain a decrementable
copy  of the value $k+1$.  Then 
$d(k)$  is at most one more than 
the number of rows containing a value strictly smaller than $k$ 
which is incrementable. 
This is because each decrementable $k+1$ which is not in the first row has a value $i<k$ immediately above it; this
$i$ may be incremented to obtain a SSYT
unless it has another $i$ to its immediate right, but in that case the rightmost 
$i$ in its row must be incrementable (since the entry immediately below it in the SSYT, if any, will be at least as large as $k+1$).  
\end{remark}

In discussing any particular SSYT below, let $r_j$ be the $j$-th row in this SSYT  which 
contains a decrementable
copy of $k+1$. 

\begin{remark}\label{diamond-pairs}
Consider a row $r_j$ of a SSYT 
which is not the lowest such row. Let $i<k$ be the entry immediately above the leftmost copy of $k+1$ in row $r_{j+1}$. Then 
we may decrement the leftmost
$k+1$ in row $r_j$ and increment the rightmost $i$ in row 
$r_{j+1}-1$  at the same time to obtain a SSYT.  This is because $i+1\le k$, so that 
we preserve the property of rows weakly increasing from left to right even in the case that
$r_j = r_{j+1}-1$; preservation of column strictness is clear.
\end{remark}

We will use these pairs of covering relations colored $k$ and $i$ to provide  diamond relations
to be used in the algorithm below.

Now we are ready to complete the description of the algorithm.  The idea for handling color $k$
is to proceed from  top to bottom through each $k$-component. More precisely, we repeatedly choose a 
maximal element $S$ in the remaining part of a $k$-component and determine the edge products for all edges
originating at $S$ whose edge products have not yet been determined; these must necessarily 
all proceed downward from $S$.
At each stage, i.e. at each poset element $S$ encountered, we 
first force the edge products for 
all but one of its downward edges colored $k$;
essentially, we use the 
diamond condition for the diamond involving a  downward covering relation $R \prec S$
which is colored $k$ 
and an upward covering relation $S \prec U$ which is colored $i$ for some $i<k$, as follows.
Assume that the covering relation $R \prec S$ corresponds to decrementing the leftmost $k+1$ in a row $r_j$ of $S$ which is not the lowest such row. 
Now letting $i$ be
the entry immediately above the leftmost copy of $k+1$ in row $r_{j+1}$ of $S$, like in Remark \ref{diamond-pairs}, obtain $U$ from $S$ 
by incrementing the rightmost $i$ in row $r_{j+1}-1$.  Obtain the fourth poset element $T$
comprising the diamond by taking $U$ and decrementing the leftmost $k+1$ in row $r_j$, cf. Remark \ref{diamond-pairs}. 
Notice that three of the four edge products 
in this diamond will have already been determined, by virtue of either being  colored $i$ for
some $i<k$  or else 
involving a poset element $T'$ which is strictly greater than $U$ in the $k$-component of
the Gelfand-Tsetlin lattice; thus, (\ref{diam1}) forces the last edge product (cf. Remark ~\ref{diamond-product}).  
This leaves exactly 
one edge product yet to be determined among the downward edges colored $k$ which  originate at $S$; it is the product for the edge that corresponds to decrementing the rightmost $k+1$ in the lowest row $r_j$. 
But 
this edge product is determined by the crossing condition.

\begin{theorem}\label{sol-and-min}
The above algorithm shows explicitly how to determine all the edge products of
the Gelfand-Tsetlin lattice, and thus implies that this lattice has the solitary property.  It also has the edge minimality property.
\end{theorem}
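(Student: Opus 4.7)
The plan has three stages: first, verify that the algorithm described just above is well-defined and terminates with a unique assignment of edge products to every edge of $GT(\lambda)$; second, derive the solitary property from this uniqueness via Lemma \ref{edgeprod}; third, derive edge minimality by combining uniqueness with the strict positivity of the edge products produced by \eqref{pist}. For the base case (color $1$), I would use that $1$-components are chains: the crossing relation \eqref{crossing} at an extremal vertex contains a single unknown edge product whose coefficient is $\pm 1$, and sweeping along the chain each successive crossing relation has exactly one unknown, so uniqueness is automatic and no division is required.

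For the inductive step (color $k$, with all colors below $k$ already handled), I would process each $k$-component top-down. At a vertex $S$ with decrementable copies of $k+1$ in rows $r_1<\cdots<r_p$, each index $j<p$ produces a diamond via Remark \ref{diamond-pairs} whose two $i$-colored edges (with $i<k$) are known by the outer induction on colors and whose upper $k$-colored edge is known by the top-down sweep within the $k$-component; relation \eqref{diam1} then forces $\pi_{R,S}$ at each such $j$. The remaining downward $k$-edge at $S$, corresponding to the lowest decrementable row $r_p$, is then the unique unknown in the color-$k$ crossing relation at $S$, which fixes its value. The main obstacle is justifying the division in \eqref{diam1}: one must verify that the three known edge products in each diamond are nonzero. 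I would resolve this by appealing to the explicit product formula \eqref{pist} and the horizontal-strip conditions on $\lambda^{(k-1)}\subseteq\lambda^{(k)}\subseteq\lambda^{(k+1)}$, which force every factor in \eqref{pist} to be strictly positive on edges that actually exist in $GT(\lambda)$. Since Theorem \ref{relations} has already established that \eqref{cst} and \eqref{dst} satisfy all the required relations, the values produced inductively by the algorithm must coincide with those of \eqref{pist}, guaranteeing consistency throughout.

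From uniqueness the two remaining conclusions follow quickly. For the solitary property, any weight basis with supporting graph $GT(\lambda)$ has the same vertex weights $m_i(T)$ (dictated by the Cartan action), so the algorithm forces its edge products to coincide with those of the Gelfand-Tsetlin basis; since $GT(\lambda)$ is connected and distributive, hence modular, Lemma \ref{edgeprod} then yields diagonal equivalence. For edge minimality, I would suppose for contradiction that some proper subgraph $G'\subsetneq GT(\lambda)$ is the supporting graph of a basis $\mathcal B'$ of $V(\lambda)$, and then extend its labels to all of $GT(\lambda)$ by setting $c=d=0$ on the missing edges. The crossing and diamond equations still hold on $GT(\lambda)$ with this extension, since adding summands equal to $0$ changes nothing; uniqueness from the first stage then forces the extended edge products to agree with those of \eqref{pist}. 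But the extension is $0$ on the missing edges while \eqref{pist} is strictly positive there, a contradiction, so no proper subgraph can support a basis of $V(\lambda)$.
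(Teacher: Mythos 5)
Your proposal is correct and follows essentially the same route as the paper: the algorithm forces every edge product in turn (color $1$ via crossing relations along chains, color $k$ via the diamonds of Remark \ref{diamond-pairs} plus one final crossing relation), uniqueness of edge products gives the solitary property via Lemma \ref{edgeprod}, and edge minimality follows because the edge products in (\ref{pist}) never vanish. You supply somewhat more detail than the paper does --- notably the justification that the divisions in (\ref{diam1}) are legitimate and the extend-by-zero contradiction for edge minimality --- but these are exactly the steps the paper's two-sentence proof leaves implicit, not a different argument.
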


\begin{proof}
Uniqueness of edge products, or in other words the solitary property, is 
proven above within the description of the algorithm.  The point is that we show how 
each edge product in turn is forced by earlier ones.
Edge minimality then follows from the 
fact that none of the resulting edge products are 0, 
cf. (\ref{pist}).    
\end{proof}

The fact that the Gelfand-Tsetlin lattice has the solitary and edge minimality properties was
previously proven in 
\cite[Theorem 4.4]{donepb}, but not in a constructive manner, so not in a way which provides
an algorithm to determine all of the edge products.  It would be interesting now to relate the
algorithm we have just given to the known edge labels.
%


\end{document}